  \newcommand{\cC}{{C}}  
  \def\<{{\langle}} 
  \def\>{{\rangle}}
  \def\eps{\varepsilon}
  \def\note#1{{}} 
 \def\can{{\rm \textsf{can}}}
  \def\note#1{}
  \def\cC{{\mathcal C}}
  \def\lhom#1#2#3{{}{\sb{#1}{\rm Hom}(#2,#3)}} 
  \def\rhom#1#2#3{{{\rm Hom}\sb{#1}(#2,#3)}}
  \def\rend#1#2{{{\rm End}\sb{#1}(#2)}}
  \def\Rhom#1#2#3{{{\rm Hom}\sp{#1}(#2,#3)}}
  \def\C{\mathbb{C}}
  \def\beq{\begin{equation}} 
  \def\eeq{\end{equation}}
  \def\id{\mathrm{id}}
  \def\ot{{\otimes}}
    \def\oan#1{\Omega^{#1}\!A}
    \def\oa{\Omega A}
    \def\obn#1{\Omega^{#1}\!B}
    \def\ob{\Omega B}
    \def\Mro{{}^M\!\varrho}
  \newcounter{zlist}
  \newcounter{blist} 
  \newenvironment{blist}{\begin{list}{(\alph{blist})}{ 
  \usecounter{blist}\leftmargin2.5em\labelwidth2em\labelsep0.5em 
  \topsep0.6ex %\itemsep0.3ex plus0.2ex minus0.3ex 
  \parsep0.3ex plus0.2ex minus0.1ex}}{\end{list}} 
  \newcounter{rlist}
\def\stac#1{\raise-.2cm\hbox{$\stackrel{\displaystyle\otimes}{\scriptscriptstyle{#1}}$}}
\def\cten#1{\raise-.2cm\hbox{$\stackrel{\displaystyle\widehat{\otimes}}
{\scriptscriptstyle{#1}}$}}
  \def\Label#1{\label{#1}\ifmmode\llap{[#1] }\else 
  \marginpar{\smash{\hbox{\tiny [#1]}}}\fi} 
  \def\Label{\label}
  \newtheorem{proposition}{Proposition}[section]
  \newtheorem{lemma}[proposition]{Lemma} 
  \newtheorem{corollary}[proposition]{Corollary}
  \theoremstyle{definition} 
  \newtheorem{definition}[proposition]{Definition}
   \newtheorem{statement}[proposition]{} 
  \newtheorem{example}[proposition]{Example} 
  \theoremstyle{remark}
  \newcounter{c} 
  \newcommand{\etyk}[1]{\vspace{-7.4mm}$$\begin{equation}\Label{#1} 
  \addtocounter{c}{1}} 
  \renewcommand{\]}{\ifnum \value{c}=1 $$\else \end{equation}\fi} 
\begin{document} 

 \title{Non-commutative connections of the second kind} 
 \author{Tomasz Brzezi\'nski}
 \address{ Department of Mathematics, Swansea University, 
  Singleton Park, \newline\indent  Swansea SA2 8PP, U.K.} 
  \email{T.Brzezinski@swansea.ac.uk}   
    \date{January 2008} 
%  \subjclass{58B34} 
  \begin{abstract} 
 A connection-like objects, termed {\em hom-connections} are defined in the realm of non-commutative geometry. The definition is based on the use of  homomorphisms rather than tensor products. It is shown that hom-connections arise naturally from (strong) connections in non-commutative principal bundles. The induction procedure of hom-connections via a map of differential graded algebras or a differentiable bimodule is described. The curvature for a hom-connection is defined, and it is shown that flat hom-connections give rise to a chain complex.  
 \end{abstract} 
  \maketitle

\section{Introduction}
The theory of connections in non-commutative geometry is well-established; see \cite{Con:ncg}. One starts with a differential graded algebra $\oa = \bigoplus_{n=0}\oan n$ over an algebra $A= \oan 0$, and defines a connection in a left $A$-module $M$ as a linear map $\nabla^0: M\to \oan 1\ot_A M$ that satisfies the Leibniz rule $\nabla^0(am) = da \ot_A m + a\nabla^0(m)$, for all $m\in M$ and $a\in A$. This is a non-commutative definition obtained by a direct replacement of commutative algebras (of functions on a manifold $X$) and their modules of sections (of a vector bundle over $X$) in the classical definition of a connection, by non-commutative algebras and their general (one-sided) modules. This definition captures very well the classical context in which connections appear and brings it successfully to the realm of non-commutative geometry. 

On the algebraic side, however, this definition of a connection seems to be only a half of a more general picture. First, a non-commutative connection  is defined with the use of the tensor functor. The tensor functor has a left adjoint, the hom-functor. It seems therefore natural to  ask whether it is possible to introduce  connection-like objects defined with the use of the hom-functor.  Second, the vector space dual to $M$ is a right $A$-module. A (left) connection in the above sense, does not induce  a (right) connection on the dual of $M$. In view of the hom-tensor relation (or, equivalently, the adjointness properties mentioned earlier) the induced map necessarily involves the hom-functor. 

The aim of this paper is to demonstrate that there is a natural and potentially quite rich theory of connection-like objects, defined as maps on the spaces of morphisms of modules. Because of the role played by the spaces of homomorphisms, these objects are termed {\em hom-connections}.

We start by introducing the notion of a hom-connection in Section~2. We then give the classical geometric interpretation of this notion in terms of vector bundles. Next we show that every left connection on a bimodule (in the sense of \cite{CunQui:alg}) gives rise to a hom-connection. This construction then leads to the main non-commutative geometric example: A strong connection in a non-commutative principal bundle or a coalgebra-Galois extension yields a hom-connection on the space of equivariant maps from the total algebra of the extension to a corepresentation space of the structure coalgebra. We then study the induction procedure of hom-connections via differentiable bimodules and, as a particular case, via maps of differential graded algebras. Section 3 is devoted to analysis of hom-connections on higher forms. It is shown that any hom-connection can be extended to higher forms. The notion of a curvature is introduced and we show that a consecutive application of hom-connections can always by expressed in terms of the curvature. This leads to a chain complex associated to a flat (i.e.\ curvature-zero) hom-connection. This chain complex and its homology can be considered as dual complements of the cochain complex associated to a connection and the twisted cohomology, which play a fundamental role in the theory of non-commutative differential fibrations \cite{BegBrz:ser}. The curvature of a hom-connection induced by a map of differential graded algebras is shown to be very closely related to the curvature of the original hom-connection. In particular this induction procedure preserves flatness. Finally we give the interpretation of flat hom-connections with respect to a semi-free differential algebra as contramodules. 

All vector spaces, algebras (always associative and with unit) are over a fixed field $k$. The unadorned tensor product is over $k$. Differential (non-negatively) graded algebras are denoted by $\oa$, $\ob$, the letter $A$, respectively $B$ indicating the zero-degree subalgebra. The $n$-degree subspace of $\oa$ is denoted by $\oan n$. Degree one differentials are denoted by $d$. As is customary, we write $da$ for $d(a)$ with understanding that $d$ is applied only to the element represented by a letter which immediately follows $d$.

\section{ Non-commutative hom-connections}
\begin{definition}\label{def.hom.con}
Given a differential graded algebra $(\oa, d)$ over an algebra $A$, a right {\em hom-connection} is a pair $(M,\nabla_0)$, where $M$ is a right $A$-module and 
$$
\nabla_0 :\rhom A {\oan 1} M \to M,
$$
is a $k$-linear map, such that, for all $f\in \rhom A {\oan 1} M$ and $a\in A$,
$$
\nabla_0(fa) = \nabla_0(f)a + f(da).
$$
Here $\rhom A {\oan 1} M $ is a right $A$-module by $(fa)(\omega) := f(a\omega)$, $\omega \in \oan 1$.
\end{definition}
Since the difference of two maps $\rhom A {\oan 1} M \to M$ satisfying conditions of Definition~\ref{def.hom.con} is a right $A$-module map, for a fixed $M$, hom-connections are an affine space over $\rhom A {\rhom A {\oan 1} M} M$.

\begin{statement}
Take a smooth manifold $X$ and a smooth vector bundle $E$ over $X$. The sections of any vector bundle over $X$ are a (right) module over the algebra of smooth functions $C^\infty(X)$. By the Serre-Swan theorem, there is an isomorphism
$$
\rhom {C^\infty(X)} {\Gamma(T^*X)} {\Gamma (E)} \simeq \mathrm{Vect}_X (T^*X, E),
$$
where $\mathrm{Vect}_X(T^*X, E)$ denotes the space of vector bundle maps $T^*X\to E$, and $\Gamma(-)$ denotes (smooth) sections. $\mathrm{Vect}_X (T^*X, E)$ is a (right) $C^\infty(X)$-module with the fibrewise product ($\mathrm{Vect}_X (T^*X, E)$ can be identified with the module of sections on the hom-bundle $\mathrm{Hom}(T^*X, E)$ over $X$).  With this identification, a hom-connection $(\Gamma(E), \nabla_0)$ corresponds to a map
$$
\nabla_0 : \mathrm{Vect}_X (T^*X, E) \to \Gamma(E),
$$
such that, for all $\varphi \in \mathrm{Vect}_X (T^*X, E)$ and $f\in C^\infty(X)$,
$$
\nabla_0(\varphi f)= \nabla_0(\varphi) f +\varphi\circ df.
$$
\end{statement}

\begin{statement} 
The difference between hom-connections and connections can be exemplified in the case when $\oan 1$ is a finitely generated and projective right $A$-module. In this case, there are isomorphisms, for any left $A$-module $N$,
$$
\rhom k N {\oan 1\ot_A N} \simeq \rhom k N {\lhom A {\oan 1^*} N} \simeq \lhom A {\oan 1^*} {\rend k N},
$$
where $\rend k N$ is a left $A$-module by $(ae)(n) := ae(n)$, for all $a\in A$, $n\in N$ and $e\in \rend k N$. Also, $\oan 1^* := \rhom A {\oan 1} A$ is viewed as an $A$-bimodule by $a\xi b (\omega) = a\xi(b\omega)$. Under this identification, a connection (in the sense of \cite{Con:ncg}) $\nabla^0: N \to \oan 1\ot_A N$, corresponds to a collection of $k$-linear maps $\{\nabla^0_\xi: N\to N \; |\; \xi \in \oan 1^*\}$, such that, for all $a\in A$ and $n\in N$,
$$
\nabla^0_{a\xi} (n) = a\nabla^0_\xi (n), \qquad \nabla^0_\xi (an) = \nabla^0_{\xi a} (n) +\xi(da) n.
$$

On the other hand, for any right $A$-module $M$,
$$
\rhom k {\rhom A {\oan 1} M} M \simeq \rhom k{M\ot_A \oan 1^*} M \simeq 
\lhom A {\oan 1^*} {\rend k M},
$$
where $\rend k M$ is a left $A$-module by $(ae)(m) := e(ma)$, for all $a\in A$, $m\in M$ and $e\in \rend k M$. A hom-connection $(M,\nabla_0)$ corresponds to a collection of $k$-linear maps $\{\nabla_0^\xi: M\to M \; |\; \xi \in \oan 1^*\}$, 
such that, for all $a\in A$ and $m\in M$,
$$
\nabla_0^{a\xi} (m) = \nabla_0^\xi (ma), \qquad \nabla_0^{\xi a} (m) = \nabla_0^\xi (m)a +m\xi(da) .
$$
\medskip
\end{statement}

\begin{statement}
In case both $A$ and $\oan 1$ are finite-dimensional, there is a one-to-one correspondence between hom-connections and connections in a comodule of the dual coalgebra $A^*$ (cf.\  \cite[Definition~6]{Brz:fla}).

Asumme that $A$ is a finite-dimensional algebra and denote by $C$ its dual coalgebra, $C= \rhom k A k$. Let $\{a^s\in A\}$ be a basis for $A$ and let $\{c^s\in C\}$ be the dual basis. Write $L = \rhom k {\oan 1} k$ for the dual vector space of $\oan 1$. $L$ is an $A$-bimodule, with multiplications given by  $(alb)(\omega) = l(b\omega a)$, for all $l\in L$, $a,b\in A$ and $\omega \in \oan 1$. Since $A$ is finite-dimensional, $L$ is a $C$-bicomodule with the left and right coactions
$$
l\mapsto \sum_s c^s \ot l a^s, \qquad l\mapsto \sum_s a^sl\ot c^s.
$$
 The differential $d: A\to \oan 1$ induces a coderivation on $L$ (cf.\ \cite{Doi:hom}) 
$$
\lambda: L\to C, \qquad l\mapsto l\circ d\ ,
$$
i.e.\ $\lambda = \rhom k d k$. Let $M$ be a right $A$-module. Then $M$ is a left $C$-comodule with the coaction $\Mro: m\mapsto \sum_s c^s \ot ma^s$. If, in addition, $\oan 1$ is finite dimensional, then there is an isomorphism of vector spaces
$$
\Upsilon: L\Box_C M \to \rhom A {\oan 1} M, \qquad \Upsilon\left(\sum_i l_i \ot m_i\right)(\omega) = \sum_i l_i(\omega)m_i,
$$
where 
$$
L\Box_C M = \left\{ \sum_i l_i \ot m_i \in L\ot M \; |\; \sum_{i,s} a^sl_i \ot c^s \ot m_i = \sum_{i,s} l_i \ot c^s\ot m_ia^s\right\}
$$ 
is the cotensor product. 
\medskip

\noindent {\bf Proposition.} {\em  Given a finite dimensional algebra $A$ and a finite-dimensional $\oan 1 $, define $C= \rhom k A k$, $L = \rhom k {\oan 1} k$ and $\lambda = \rhom k dk$. The assignment
$$
\nabla_0 \mapsto - \nabla_0 \circ \Upsilon,
$$
establishes a bijective correspondence between hom-connections $\nabla_0 : \rhom A {\oan 1} M\to M$  and connections $L\Box_C M  \to M$ in a left $C$-comodule $M$ (with respect to the coderivation $\lambda : L\to C$).}
\medskip 

\begin{proof}
First note that, for all $\sum_i l_i \ot m_i \in L\Box_C M$ and  $a\in A$,
\begin{equation}\label{triangle}
\Upsilon\left(\sum_i l_i \ot m_i\right) a = \Upsilon\left(\sum_i l_i a \ot m_i\right).
\end{equation}
Take a hom-connection $\nabla_0$, write $\bar{\nabla} : =  - \nabla_0 \circ \Upsilon$, and compute
\begin{eqnarray*}
\left(\Mro \circ \bar{\nabla}\right)\!\!\!\!\!\!\!\!\!\!\!\!\!\!\!&& \left(\sum_i l_i \ot m_i\right) = - \sum_s c^s \ot \nabla_0\left( \Upsilon \left(\sum_i l_i \ot m_i\right)\right) a^s \\
&=& - \sum_s c^s \ot \nabla_0\left( \Upsilon \left(\sum_i l_i \ot m_i\right) a^s \right) +
\sum_s c^s \ot  \Upsilon \left(\sum_i l_i \ot m_i\right)\left(d a^s \right)\\
&=&  - \sum_s c^s \ot \nabla_0\left( \Upsilon \left(\sum_i l_i a^s\ot m_i\right)  \right) +
\sum_{i,s} c^s \ot   l_i (da^s) m_i\\
&=&  \sum_s c^s \ot \bar{\nabla}\left( \sum_i l_i a^s\ot m_i\right) +
 \sum_i \lambda(l_i)\ot m_i,
\end{eqnarray*}
where the second equality follows by the fact that $\nabla_0$ is a hom-connection, while the third one follows by equation \eqref{triangle}. In the view of the definition of the left $C$-coaction on $L$, this calculation confirms that $\bar{\nabla}$ is a connection in the left $C$-comodule $(M,\Mro)$.

Let $\{\omega^t\}$ be a finite basis of the vector space $\oan 1$ and let $\{e^t\in L\}$ be its dual basis. In terms of this basis the inverse of $\Upsilon$ is $\Upsilon^{-1}: f\mapsto \sum_t e^t\ot f(\omega^t)$.  Note that, for all $a\in A$, $\omega \in \oan 1$, $f\in \rhom A {\oan 1} M$,
$$
 f(a\omega) = \sum_t e^t(a\omega)f(\omega^t),
$$
i.e.,
\begin{equation}\label{a-eq}
fa =  \sum_t e^ta \ot f(\omega^t).
\end{equation}
Take a connection $\bar{\nabla} : L\Box_C M\to M$ in the left $C$-comodule $(\Mro, M)$, and set $\nabla_0 = \bar{\nabla}\circ \Upsilon^{-1}$. Since $\bar{\nabla}$ is a connection in a comodule, for all $\sum_i l_i \ot m_i \in L\Box_C M$,
$$
\sum_s c^s \ot \bar{\nabla} \left(\sum_i l_i \ot m_i\right) a^s =  \sum_s c^s \ot \bar{\nabla}\left( \sum_i l_i a^s\ot m_i\right) +
 \sum_i \lambda(l_i)\ot m_i.
 $$
Evaluating this equality at $a\in A$, one immediately finds
\begin{equation}\label{a-eq2}
\bar{\nabla} \left(\sum_i l_i \ot m_i\right) a = \bar{\nabla} \left(\sum_i l_i a \ot m_i\right)  + \sum_i l_i(da)m_i.
\end{equation}
Combining equations \eqref{a-eq} and \eqref{a-eq2} one obtains
\begin{eqnarray*}
\nabla_0(fa) &=& -\bar{\nabla}\left(\sum_t e^ta \ot f(\omega^t)\right) = -\bar{\nabla}\left(\sum_t e^t \ot f(\omega^t)\right) a + \sum_t e^t(da) f(\omega^t)\\
&=& \nabla_0(f)a + f(da).
\end{eqnarray*}
This proves that $(M,\nabla_0)$ is a hom-connection.
\end{proof}

\end{statement}

\begin{example}\label{ex.inner} Suppose that $(\oan 1, d)$ is an {\em inner} first order differential calculus on $A$, that is that there is a one-form $\Xi\in \oan 1$ such that, for all $a\in A$, 
$$
da = a\Xi -\Xi a.
$$
Then, for any right $A$-module $M$, 
$$
\nabla^\Xi_0: \rhom A {\oan 1} M \to M, \qquad f\mapsto f(\Xi),
$$
is a hom-connection. 

The universal first order differential calculus on $A$, i.e.\ the differential graded algebra with $\oan 1$ given as the kernel of the multiplication map $\mu: A\ot A\to A$ and with $d: a\mapsto 1\ot a - a\ot 1$, is inner if and only if $A$ is a separable algebra. This means  that there exists an element $\iota \in A\ot A$ such that, for all $a\in A$,   $a\iota=\iota a$ and $\mu(\iota) =1$; see e.g.\ \cite[Chapter II\S 1]{DeMIng:sep}. The generating universal one-form is defined by $\Xi = \iota-1\ot 1$. Thus every module over a separable algebra admits a hom-connection with respect to the universal differential graded algebra.
\end{example}

\begin{example} As a very explicit example of a hom-connection, we calculate all hom-connections on the Laurent polynomial algebra $A=\C [u,u^{-1}]$,  with respect to the non-commutative differential calculus $\oan 1 := du A$ with the $A$-bimodule structure determined by $udu =q du u$, $q\in \C\setminus \{0\}$.  If $q\neq 1$, the differential calculus $\oan 1$ is inner with the generating form 
$$
\Xi = \frac{1}{q-1}duu^{-1}.
$$
Write $f(u)= \sum_n c_n u^n$ for a general element of $A$, and then, for any $\gamma\in \C$,  denote by  $f(\gamma u)$ the  polynomial $\sum_n c_n\gamma^n u^n$. Take any right $A$-module $M$. Then $M\simeq \rhom A {\oan 1} M$, where the element $m\in M$ is sent to $R_m \in \rhom A {\oan 1} M$ defined by $R_m(du f(u)) = mf(u)$. Note that $f(u)du = du f(qu)$, hence $R_m f(u) = R_{mf(qu)}$. Furthermore
$$
df(u) = du \left(\partial_qf(u)\right), \qquad \partial_q f(u) = \frac{f(qu)-f(u)}{q-1} u^{-1},
$$ 
(in case $q=1$ the Jackson $q$-derivative $\partial_q$ should be replaced by the usual derivative). With these computations at hand, one easily checks that hom-connections $\rhom A {\oan 1} M \to M$ are in bijective correspondence with $\C$-linear maps $\nabla_0: M\to M$ such that, for all $m\in M$, $f(u)\in A$,
$$
\nabla_0(mf(qu)) = \nabla_0 (m) f(u) + m \partial_q f(u).
$$
The hom-connection $\nabla_0^\Xi$ comes out as
$$
\nabla_0^\Xi(m) = \frac{1}{q-1}mu^{-1}, \qquad (q\neq 1).
$$
In particular, in the case of a regular $A$-module $M=A$ hom-connections are in one-to-one correspondence with elements $a$ of $A$, by
$$
\nabla_0^a(f(u)) = a f(q^{-1}u) + \partial_q f(q^{-1}u).
$$
Note that $a = \nabla_0^a(1)$. The hom-connection $\nabla_0^\Xi$ corresponds to $a =  \frac{1}{q-1}u^{-1}$.
\end{example}

\begin{statement}\label{diff.bim}  Consider differential graded algebras $\oa$ and $\ob$. There is a procedure of induction of hom-connections with respect to $\ob$ from hom-connections with respect to $\oa$  by a {\em differentiable bimodule}, a notion introduced in \cite[Definition~2.10]{BegBrz:ser} as an extension of ideas developed in 
 \cite[Section~3.6]{Mad:int}, \cite{DubMad:cur}.
 
A 
$(B,A)$-bimodule $E$ with a left connection 
$\nabla^0: E\to \obn 1\ot_{B} E$ and a $(B,A)$-bimodule map 
$\sigma: E\ot_{A}\oan 1\to \obn 1\ot_{B}E$ is called a 
{\em differentiable $(\ob,\oa)$-bimodule}, provided that, for all $e\in E$ and
$a\in A$, 
\begin{equation} \label{eq.a}
\nabla^0(ea) = \nabla^0(e)a +\sigma(e\ot_{A}da). 
\end{equation}
For all $e\in E$, we write $\sigma_e$ for a right $A$-linear map $\oan 1\to \obn 1\ot_B E$, $\omega\mapsto \sigma(e\ot_A \omega)$.\medskip

\noindent {\bf Theorem.} {\em  Let $(E,\nabla^0,\sigma)$ be a differentiable $(\ob,\oa)$-bimodule. For any hom-connection $\nabla_0: \rhom A {\oan 1} M\to M$, there is a hom-connection $(\rhom A E M, \nabla_0^E)$, 
$$
\nabla^E_0:  \rhom B {\obn 1} {\rhom A E M} \to \rhom A E M,
$$
defined by
$$
\nabla^E_0 (f)(e) := \nabla_0(f\circ \sigma_e) - f\left(\nabla^0\left( e\right)\right),
$$
for all $f\in \rhom B {\obn 1} {\rhom A E M}  \simeq \rhom A {\obn 1\ot_B E} M$ and $e\in E$.}\medskip

\begin{proof}
First note that, since $\sigma$ is defined on the tensor product of $A$-modules, for all $a\in A$, $(f\circ\sigma_e)a = f\circ\sigma_{ea}$. Hence
\begin{eqnarray*}
\nabla_0^E(f)(ea) &=& \nabla_0\left(\left(f\circ \sigma_e\right) a\right) - f\left(\nabla^0\left( ea\right)\right)\\
&=& \nabla_0\left(f\circ \sigma_e\right) a + (f\circ\sigma_e)(da) -  f\left(\nabla^0(e)a\right) -f\left(\sigma(e\ot_{A}da)\right) \\
&=& \nabla_0\left(f\circ \sigma_e\right) a  - f\left(\nabla^0(e)\right)a = \nabla_0^E(f)(e)a,
\end{eqnarray*}
where the second equality follows by the definition of a hom-connection and by equality \eqref{eq.a} which is a part of the definition of a differentiable bimodule. The third equality is a consequence of the right $A$-linearity of $f$. This proves that $\nabla_0^E(f)$ is a right $A$-module map, as required.

To check the hom-connection property of $\nabla_0^E$, in addition to any $f$ and $e$, take also any $b\in B$, observe that the left $B$-linearity of $\sigma$ implies that $fb\circ\sigma_e = f\circ\sigma_{be}$, and compute
\begin{eqnarray*}
\nabla_0^E(fb)(e) &=& \nabla_0(fb\circ \sigma_e) - fb\left(\nabla^0\left( e\right)\right)
= \nabla_0(f\circ\sigma_{be}) - f\left(b\nabla^0\left( e\right)\right)\\
&=& \nabla_0(f\circ\sigma_{be}) - f\left(\nabla^0\left(b e\right)\right) +f(db\ot_B e)
= \nabla_0^E(f)(be) +f(db)(e),
\end{eqnarray*}
where the penultimate equality follows by the Leibniz rule for the left connection $\nabla^0$. Therefore, $\nabla_0^E$ is a hom-connection.
\end{proof}

\end{statement}

\begin{statement}\label{con.hom-con}
A left connection in a bimodule  is a special case of the differentiable bimodule discussed in \ref{diff.bim}. Recall  from \cite[Section~8]{CunQui:alg} that
given an algebra $B$, a left connection on an $(A,B)$-bimodule $M$ is a connection $\nabla^0: M\to \Omega^1A\ot_A M$ that is a right $B$-module map. As a consequence of Theorem~\ref{diff.bim} one obtains
\medskip

\noindent {\bf Corollary.} {\em  Let $\nabla^0: M\to \oan 1\ot_A M$ be a left connection in an $(A,B)$-bimodule $M$. Then, for any right $B$-module $N$, the pair $(\rhom B M N,\nabla_0)$, where
$$
\nabla_0 := - \rhom B {\nabla^0} N: \rhom B {\oan 1 \ot_A M} N \to \rhom B M N,
$$
is a hom-connection.}
\medskip 

\begin{proof}
Since $\nabla^0$ is a right $B$-module map, $(M,\nabla^0, 0)$ is a differentiable $(\oa, \ob)$-bimodule, with $\ob =B$. The map $\nabla_0$ is induced from the zero hom-connection in $M$ by the procedure described in Theorem~\ref{diff.bim}.  
\end{proof}
\end{statement}

\begin{example}
The left $A$-module $A$ has a left connection $d: A\mapsto \oan 1\ot_A A\simeq \oan 1$. Let $B$ be any subalgebra of the constant algebra $H^0(A)=\{a\in A \; |\; da =0\}$. Then $d$ is a left connection in an $(A,B)$-bimodule $A$, hence, for any right $B$-module $N$, it induces a  hom-connection in $\rhom B A N$, $\nabla_0(f) (a) = -f(da)$. Thus, although $A$ might not have a hom-connection, its vector space dual always has a hom-connection.
\end{example}

\begin{statement} \label{strong} An example of a hom-connection of more geometric origin comes from the theory of (strong) connections on non-commutative principal bundles; see e.g.\ \cite{Haj:str}, \cite{BrzMaj:geo}, \cite{DabGro:str}.

Let $C$ be a coalgebra and let $P$ be an algebra and a right $C$-comodule with the coaction $\Delta_P: P\to P\ot C$. Define the coinvariant subalgebra of $P$,
$$
A:= \{ a\in P \: | \; \forall p\in P, \; \Delta_P(ap) = a\Delta_P (p) \},
$$
and consider the {\em canonical} map
$$
\can : P\ot_A P\to P\ot C, \qquad p\ot_A q\mapsto p\Delta_P(q).
$$
The algebra inclusion $A\subseteq P$ is called a {\em coalgebra-Galois extension}, provided the canonical map is bijective. A coalgebra-Galois extension is a rudimentary version of a non-commutative principal bundle; the bijectivity of $\can$ represents freeness of the action of a structure group, represented by $C$, on the total space of a principal bundle, represented by $P$.

A {\em strong connection} in a  coalgebra-Galois extension is a $C$-covariant splitting of the module of universal one-forms on $P$ into horizontal and vertical parts. It is described equivalently as a $k$-linear map $D: P\to \left(\oan 1\right) P \simeq \oan 1 \ot_AP$ such that
\begin{blist}
\item for all $a\in A$ and $p\in P$, $D(ap) = aD(p) + da\ot_A p$,
\item $(\id \ot \Delta_P)\circ D = (D\ot \id)\circ \Delta_P$.
\end{blist}
Here $\oan 1$ denotes the universal differential structure on $A$ (the kernel of the multiplication map). Since $P$ is a right $C$-comodule, it is a right module for the opposite of the convolution algebra of $C$, $B=(C^*)^{op}$. By the construction of $A$, the coaction $\Delta_P$ is left $A$-linear, hence $P$ is an $(A,B)$-bimodule. The condition (a) means that $D$ is a left connection in $P$, while condition (b) means that $D$ is right $B$-linear. In a word: $D$ is a left bimodule connection in $P$.

Take a right $C$-comodule $V$. The $C$-coaction canonically induces the right $B$-module structure on $V$. Furthermore 
$$
\rhom B P V \simeq \Rhom C P V,
$$
where the latter denotes the vector space of $C$-colinear maps $P\to V$. $\Rhom C P V$ is formally dual to a noncommutative vector bundle with standard fibre $V$ associated to $A\subseteq P$ (such a bundle is defined as $\Rhom C V P$). By \ref{con.hom-con}, a strong connection $D$ gives rise to a hom-connection $(\Rhom C P V , \nabla_0=-\Rhom C D V)$.
\end{statement}

\begin{statement}\label{ind} A map of differential graded algebras $\theta: \oa \to \ob$ allows one to view both $B$ and $\obn 1$  as $A$-bimodules by (the zero degree component of) $\theta$. For any right $A$-module $M$, there are also natural isomorphisms of vector spaces,
$$
\rhom B {\obn 1} {\rhom A B M} \simeq \rhom A {\obn 1\ot_B B} M \simeq \rhom A {\obn 1} M.
$$
Furthermore, $B$ is a differentiable $(\ob,\oa)$-bimodule with the connection $b\mapsto db$ and with the twist map $\sigma: B\ot_A \oan 1 \to \obn 1\ot_B B\simeq \obn 1$, $b\ot_A \omega \mapsto b\theta(\omega)$; see \cite[Example~2.11]{BegBrz:ser}. With this in mind one can state the following consequence of Theorem~\ref{diff.bim}.\medskip

\noindent {\bf Corollary.} {\em  Given a map $\theta : \oa \to \ob$ of differential graded algebras and a hom-connection $\nabla_0: \rhom A {\oan 1} M\to M$, there is a hom-connection $(\rhom A B M, \nabla_0^\theta)$, 
$$
\nabla^\theta_0:  \rhom B {\obn 1} {\rhom A B M} \to \rhom A B M,
$$
defined by
$$
\nabla^\theta_0 (f)(b) := \nabla_0(f\circ\ell_b\circ \theta) - f(db),
$$
for all $f\in \rhom B {\obn 1} {\rhom A B M}  \simeq \rhom A {\obn 1} M$ and $b\in B$. Here $\ell_b$ denotes the left multiplication by $b\in B$, $\ell_b: \obn 1\to \obn 1$, $\omega\mapsto b\omega$.}\medskip

\begin{proof}
One needs only to observe that, for all $b\in B$, $\sigma_b = \ell_b\circ\theta$, and then apply Theorem~\ref{diff.bim} to the differentiable bimodule $B$.
\end{proof}
\end{statement}

\section{Curvature and flat hom-connections}
\setcounter{equation}{0}

\begin{statement}\label{higher} Any hom-connection $(M,\nabla_0)$ can be extended to higher forms by the Leibniz rule as follows. The vector space $\bigoplus_{n=0} \rhom A {\oan n} M$ is a right module of $\oa$ with the multiplication, for all $\omega\in \oan n$, $f\in \rhom A{\oan{n+m}} M$, $\omega'\in \oan m$,
$$
f\omega (\omega') := f(\omega\omega').
$$
Note that $f\omega \in \rhom A {\oan m} M$. 

For all $n$, define
$$
\nabla_n: \rhom A {\oan {n+1}}M \to \rhom A {\oan {n}}M,
$$
by
$$
\nabla_n(f)(\omega) :=  \nabla_0 (f\omega) + (-1)^{n+1} f(d\omega),
$$
for all $f\in \rhom A {\oan{n+1}}M$ and $\omega \in \oan{n}$.
Note that for $n=0$ this formula reduces to the Leibniz rule for $\nabla_0$ in Definition~\ref{def.hom.con}, once $\rhom AAM$ is canonically identified with $M$.
\end{statement}
\begin{lemma}\label{lemma.Leibniz}
 For all $n\geq 0$, $\omega\in \oan m$ and $f\in \rhom A {\oan{m+n+1}}M $,
$$
\nabla_n(f\omega) = \nabla_{m+n}(f) \omega +(-1)^{m+n} f{d\omega}.
$$
\end{lemma}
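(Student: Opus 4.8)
The plan is to prove the identity by evaluating both sides on an arbitrary $\eta\in\oan n$, since each side lies in $\rhom A {\oan n} M$. First I would record the elementary consequences of the defining formula $g\omega(\omega')=g(\omega\omega')$ for the right $\oa$-action of \ref{higher}: namely that it gives associativity $(g\omega)\eta=g(\omega\eta)$ for composable degrees, which lets me regroup freely the forms appearing inside $f$.

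Expanding the left-hand side with the definition of $\nabla_n$—using associativity in the $\nabla_0$-term and the defining formula in the other—I would obtain
$$
\nabla_n(f\omega)(\eta) = \nabla_0\bigl(f(\omega\eta)\bigr) + (-1)^{n+1} f(\omega\, d\eta).
$$
On the right-hand side the defining formula turns the two summands into $\nabla_{m+n}(f)(\omega\eta)$ and $(-1)^{m+n}f\bigl((d\omega)\eta\bigr)$; applying the definition of $\nabla_{m+n}$ to $\omega\eta\in\oan{m+n}$ then expands the first into $\nabla_0\bigl(f(\omega\eta)\bigr) + (-1)^{m+n+1}f\bigl(d(\omega\eta)\bigr)$.

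The decisive step is the graded Leibniz rule $d(\omega\eta)=(d\omega)\eta+(-1)^m\omega\,d\eta$. Substituting it into $f(d(\omega\eta))$ produces two copies of $f((d\omega)\eta)$ carrying coefficients $(-1)^{m+n+1}$ and $(-1)^{m+n}$, which cancel, while the surviving term has coefficient $(-1)^{m+n+1}(-1)^m=(-1)^{n+1}$ and so matches the $f(\omega\, d\eta)$ term on the left exactly. As the $\nabla_0(f(\omega\eta))$ contributions already agree, the two sides coincide. I expect no conceptual difficulty here; the only real care is tracking the three degrees $m$, $n$ and the unit shift from $d$ together with their signs, and the single non-formal input is the graded Leibniz rule, whose factor $(-1)^m$ is precisely what makes both the cancellation and the surviving sign work out.
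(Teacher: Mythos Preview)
Your proposal is correct and follows essentially the same route as the paper's proof: evaluate both sides on a test form $\eta\in\oan n$ (the paper calls it $\omega'$), expand via the definitions of $\nabla_n$ and $\nabla_{m+n}$, and let the graded Leibniz rule $d(\omega\eta)=(d\omega)\eta+(-1)^m\omega\,d\eta$ do the work. The only cosmetic difference is that the paper records the $n=0$ case separately as a tautology and then computes each side for $n>0$ before comparing, whereas you phrase the comparison as a cancellation; the sign bookkeeping is identical.
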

\begin{proof}
For $n=0$ this is simply the definition of $\nabla_n$ in \ref{higher}. For $n>0$, take any $\omega' \in \oan n $, $f\in \rhom A {\oan{m+n+1}}M$ and $\omega\in \oan m$. Then, on one hand, the definition of $\nabla_n$ yields
$$
\nabla_n (f\omega)(\omega') = \nabla_0(f\omega\omega') + (-1)^{n+1} (f\omega)(d\omega') = \nabla_0(f\omega\omega') + (-1)^{n+1} f(\omega d\omega').
$$
On the other hand, the definition of $\nabla_{m+n}$ and the (graded) Leibniz rule for $d$ imply that
\begin{eqnarray*}
(\nabla_{m+n}(f) \omega) (\omega') &=& \nabla_{m+n}(f)(\omega\omega')\\
& =& \nabla_0(f{\omega\omega'}) + (-1)^{n+1} f(\omega d\omega') +(-1)^{m+n+1} f(d\omega \omega').
\end{eqnarray*}
Hence,
$
\nabla_n(f\omega) = \nabla_{m+n}(f) \omega +(-1)^{m+n} f{d\omega},
$
as claimed.
\end{proof}

\begin{proposition}\label{prop.curvature}
Let $(M,\nabla_0)$ be a hom-connection. For all $n>0$:
\begin{blist}
\item  $\nabla_{n-1}\circ \nabla_n$ is a right $A$-linear map.
\item Set $F = \nabla_0\circ\nabla_1$, and define
$$
\Theta_n: \rhom A {\oan{n+1}} M \to \rhom A {\oan{n-1}} {\rhom A {\oan 2} M}, $$
$$
\Theta_n(f)(\omega_1)(\omega_2) = f(\omega_1\omega_2).
$$
Then the following diagram
$$
\xymatrix{\rhom A {\oan{n+1}} M \ar[rr]^{\nabla_{n-1}\circ\nabla_n} \ar[rd]_{\Theta_n} && \rhom A {\oan{n-1}} M \\
& \rhom A {\oan{n-1}} {\rhom A {\oan 2} M\ar[ur]_{\;\;\;\;\;\;\; \rhom A {\oan{n-1}} F} } &}
$$
commutes.
\end{blist}
\end{proposition}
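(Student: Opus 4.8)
The plan is to reduce both claims to pointwise evaluation on $\oan{n-1}$ and to feed everything through the Leibniz-type rule of Lemma~\ref{lemma.Leibniz} together with the single structural identity $d^2=0$. I would treat part (b) first, since it is the substantive computation and part (a) drops out of it for free.

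For (b), fix $f\in\rhom A{\oan{n+1}}M$ and $\omega_1\in\oan{n-1}$. The first observation is that $\Theta_n(f)(\omega_1)$ is precisely the right multiple $f\omega_1\in\rhom A{\oan 2}M$ in the sense of \ref{higher}, since both maps send $\omega_2\mapsto f(\omega_1\omega_2)$; hence the lower path $(\rhom A{\oan{n-1}}F)\circ\Theta_n$, evaluated at $\omega_1$, equals $F(f\omega_1)=\nabla_0(\nabla_1(f\omega_1))$. For the top arrow I would expand $(\nabla_{n-1}\circ\nabla_n)(f)(\omega_1)$ by the definition of $\nabla_{n-1}$ in \ref{higher}, obtaining the two terms $\nabla_0\big((\nabla_n f)\omega_1\big)$ and $(-1)^n(\nabla_n f)(d\omega_1)$.

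The key step is to rewrite each of these. Applying Lemma~\ref{lemma.Leibniz} with $m=n-1$ gives $(\nabla_n f)\omega_1=\nabla_1(f\omega_1)-(-1)^n f\,d\omega_1$, so the first term becomes $\nabla_0(\nabla_1(f\omega_1))-(-1)^n\nabla_0(f\,d\omega_1)$. For the second term, evaluating $\nabla_n f$ at $d\omega_1\in\oan n$ via the definition of $\nabla_n$ produces $\nabla_0(f\,d\omega_1)+(-1)^{n+1}f(d\,d\omega_1)$, where $d^2=0$ annihilates the last summand, leaving $(\nabla_n f)(d\omega_1)=\nabla_0(f\,d\omega_1)$. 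Adding the two terms, the contributions $\mp(-1)^n\nabla_0(f\,d\omega_1)$ cancel and what survives is exactly $\nabla_0(\nabla_1(f\omega_1))=F(f\omega_1)$, matching the lower path. This proves commutativity.

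For (a), once (b) is established I would simply note that both $\Theta_n$ and $\rhom A{\oan{n-1}}F$ are right $A$-linear: the former because $(fa)(\omega_1\omega_2)=f(a\omega_1\omega_2)=\Theta_n(f)(a\omega_1)(\omega_2)$, the latter because post-composition with $F$ leaves untouched the module structure inherited from $\oan{n-1}$; hence their composite $\nabla_{n-1}\circ\nabla_n$ is right $A$-linear. (Alternatively, (a) has a self-contained proof: apply Lemma~\ref{lemma.Leibniz} to $\nabla_n(fa)$, then apply $\nabla_{n-1}$ to each resulting term, using $d^2=0$ to identify $\nabla_{n-1}(f\,da)=(\nabla_n f)\,da$, after which the two derivation terms cancel.) The only real obstacle here is bookkeeping rather than conceptual difficulty: one must scrupulously distinguish the two roles of juxtaposition, namely the right multiple $f\omega$ living in $\rhom A{\oan\bullet}M$ versus the evaluation $f(\omega)$ living in $M$, keep track of which index each $\nabla_k$ carries at every stage, and pin down the signs $(-1)^n$ so that the two $\nabla_0(f\,d\omega_1)$ terms cancel. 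All the genuine content is carried by $d^2=0$, which is exactly what forces the square $\nabla_{n-1}\circ\nabla_n$ to factor through the curvature $F$.
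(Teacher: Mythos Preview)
Your proof is correct and uses the same ingredients as the paper: the definition of $\nabla_n$ from \ref{higher}, Lemma~\ref{lemma.Leibniz} with $m=n-1$, and $d^2=0$. The only organizational difference is that the paper proves (a) directly first (your parenthetical ``alternative'' argument) and then (b), whereas you prove (b) first and read off (a) from the $A$-linearity of $\Theta_n$ and of post-composition with $F$; your route is slightly more economical but otherwise the computations line up step for step.
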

\begin{proof}
(a) The application of Lemma~\ref{lemma.Leibniz} yields, for all $f\in \rhom A {\oan{n+1}} M$ and $a\in A$,
\begin{eqnarray*}
\nabla_{n-1}\circ\nabla_n (fa) &=& \nabla_{n-1} (\nabla_n(f) a) +(-1)^n\nabla_{n-1} (f{da}) \\
&=& \nabla_{n-1}(\nabla_n(f))a +(-1)^{n-1}\nabla_n(f){da} + (-1)^n\nabla_{n-1} (f{da}).
\end{eqnarray*}
Take any $\omega \in \oan n$ and, using the (graded) Leibniz rule for $d$ and the definition of the $\nabla_n$,  compute
\begin{eqnarray*}
&&\hspace{-.5in}\left(\nabla_n(f){da} - \nabla_{n-1}(f{da})\right) (\omega) = \nabla_n (f)(da\omega) - \nabla_0(f{da}\omega) - (-1)^n f(dad\omega)\\
&=&  \nabla_0(f{da\omega}) +(-1)^{n+1} f(d(da\omega)) - \nabla_0(f{da\omega}) - (-1)^n f(dad\omega) = 0.
\end{eqnarray*}
Therefore,  $\nabla_{n-1}\circ \nabla_n (fa) =\left(\nabla_{n-1}\circ \nabla_n (f)\right) a$, as required. \bigskip

(b) For all $\omega\in \oan {n-1}$ and $f\in \rhom A {\oan{n+1}} M$,
\begin{eqnarray*}
(\nabla_{n-1}\circ\nabla_n) (f)(\omega) &=& \nabla_0(\nabla_n(f)\omega) + (-1)^n \nabla_n(f)(d\omega) \\
&=& \nabla_0(\nabla_n(f)\omega) + (-1)^n\nabla_0(f{d\omega}).
\end{eqnarray*}
By Lemma~\ref{lemma.Leibniz}, 
$$
\nabla_n(f)\omega = \nabla_1 (f\omega) + (-1)^{n-1} f{d\omega}.
$$
Hence 
$$
(\nabla_{n-1}\circ\nabla_n) (f)(\omega) = \nabla_0 \left(\nabla_1 \left(f\omega\right)\right) = \left(\rhom A {\oan{n-1}} F \circ \Theta_n\right)(f)(\omega),
$$
as claimed. 
\end{proof}

\begin{definition}
The right $A$-module map $F = \nabla_0\circ \nabla_1 : \rhom A {\oan 2} M \to M$ is called a {\em curvature} of a hom-connection $(M,\nabla_0)$. A hom-connection $(M,\nabla_0)$ is said to be {\em flat} if its curvature is the zero map. 
\end{definition}

\begin{corollary}
\begin{blist}
\item Any flat hom-connection $(M,\nabla_0)$ gives rise to the chain complex
$$
\xymatrix{
\cdots \ar[r]^-{\nabla_2}  & \rhom A {\oan 2} M \ar[r]^-{\nabla_1}  & \rhom A {\oan 1} M \ar[r]^-{\nabla_0}  & M.}
$$
The homology of this complex is denoted by $H_*(A; M,\nabla_*)$.
\item The action of $\oa$ on $\bigoplus_{n=0}\rhom A {\oan n} M$ described in \ref{higher} descends to the right action of the cohomology of $\oa$, $H^*(A)$, on  $H_*(A; M,\nabla_*)$
\end{blist}
\end{corollary}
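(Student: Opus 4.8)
For part (a), the plan is to read the vanishing of consecutive differentials directly off Proposition~\ref{prop.curvature}. Part (b) of that proposition exhibits the factorization $\nabla_{n-1}\circ\nabla_n = \rhom A {\oan{n-1}} F \circ \Theta_n$, where $F=\nabla_0\circ\nabla_1$ is the curvature. Since the hom-connection is assumed flat, $F=0$, and $\rhom A {\oan{n-1}} F$ is nothing but postcomposition with $F$; hence it is the zero map. Therefore $\nabla_{n-1}\circ\nabla_n=0$ for every $n>0$, which is precisely the assertion that the sequence $\cdots\to\rhom A {\oan 2} M\to\rhom A {\oan 1} M\to M$ is a chain complex. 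No further computation is required.

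For part (b), I would first record that the multiplication $f\omega(\omega'):=f(\omega\omega')$ of \ref{higher} makes $\bigoplus_{n}\rhom A {\oan n} M$ a genuine right $\oa$-module: associativity $(f\omega)\omega'=f(\omega\omega')$ follows at once from associativity in $\oa$, and the unit acts as the identity. The single ingredient governing compatibility with the two differentials is the graded Leibniz rule of Lemma~\ref{lemma.Leibniz}, which schematically reads $\nabla(f\omega)=\nabla(f)\,\omega\pm f\,d\omega$, with the precise sign and indices as stated there. Everything else is formal.

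The descent is then standard homological algebra. To define the right action of $[\omega]\in H^m(A)$ on $[f]\in H_*(A;M,\nabla_*)$ by $[f]\cdot[\omega]:=[f\omega]$, I would verify three points, each an immediate specialization of Lemma~\ref{lemma.Leibniz} with matched degrees: if $f$ is a cycle and $d\omega=0$ then $f\omega$ is again a cycle; if $f$ is a cycle and $\omega=d\eta$ then $f\omega=\pm\nabla(f\eta)$ is a boundary; and if $f=\nabla(g)$ is a boundary and $d\omega=0$ then $f\omega=\nabla(g\omega)$ is a boundary. These three facts make $[f\omega]$ independent of the chosen representatives of $[f]$ and $[\omega]$, so the product on homology and cohomology classes is well defined; compatibility with the algebra structure of $H^*(A)$, namely $[f]\cdot([\omega][\omega'])=([f]\cdot[\omega])\cdot[\omega']$, descends directly from the chain-level associativity already noted. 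The only delicate point is the degree bookkeeping in applying Lemma~\ref{lemma.Leibniz}: one must line up the degrees of $f$, $\omega$, and $\eta$ so that the lemma applies verbatim in each case, and keep track of the range $p\geq m$ in which the action $\rhom A {\oan p} M\to\rhom A {\oan{p-m}} M$ is nontrivial. Beyond this routine matching of indices there is no real obstacle, the substance of the corollary being carried entirely by the curvature factorization of Proposition~\ref{prop.curvature} and by the Leibniz rule of Lemma~\ref{lemma.Leibniz}.
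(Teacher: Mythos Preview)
Your proposal is correct and follows essentially the same approach as the paper: part~(a) is deduced from the factorization in Proposition~\ref{prop.curvature}(b), and part~(b) from the Leibniz rule of Lemma~\ref{lemma.Leibniz}. The paper's own proof consists of exactly these two references with no further elaboration, so your write-up simply spells out the standard homological argument that the paper leaves to the reader.
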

\begin{proof}
Part (a) follows immediately by the commutative diagram in Proposition~\ref{prop.curvature}~(b).
Assertion
(b) is a consequence of Lemma~\ref{lemma.Leibniz}.
\end{proof}

\begin{example}
Suppose that $(\oan 1, d)$ is an {inner} first order differential calculus on $A$ with the generating form $\Xi$, and take the associated hom-connection $(M,\nabla^\Xi_0)$ as in Example~\ref{ex.inner}. Then, for all $f\in  \rhom A {\oan 2} M$ and $\omega\in \oan 1$,
$$
\nabla_1^\Xi(f)(\omega) = f(d\omega) +\nabla_0^\Xi (f\omega) = f(d\omega +\omega\Xi),
$$
hence the curvature of $\nabla_0^\Xi$ comes out as
$$
F(f) = f(d\Xi +\Xi^2).
$$
Thus $(M,\nabla_0^\Xi)$ is a flat connection, provided $d\Xi +\Xi^2 =0$. Conversely, if $M$ cogenerates $\oan 2$ as a right $A$-module, then flatness of $(M,\nabla_0^\Xi)$ implies that $d\Xi +\Xi^2 =0$.
\end{example}

\begin{statement} In the setup of \ref{con.hom-con}, a left connection $\nabla^0: M\to \oan 1\ot_A M$ on an $(A,B)$-bimodule is extended to higher forms by the graded Leibniz rule,
$$
\nabla^n : \oan n \ot_A M\to \oan{n+1} \ot_A M, \qquad \omega \ot_A m \mapsto d\omega \ot_A m + (-1)^n \omega \nabla^0(m).
$$
The higher forms $\nabla_n$ of 
a hom-connection $(\rhom B MN, \nabla_0 = - \rhom B {\nabla^0} N)$ are related to the $\nabla^n$ by
$$
\nabla_n = (-1)^{n+1} \rhom B {\nabla ^n} N.
$$
Thus the curvature of $\nabla_0$ comes out as
$$
F = -\rhom B {\bar{F}} N,
$$
where $\bar{F}$ is the curvature of $\nabla^0$. Consequently,  the hom-connection $(\rhom B MN, \nabla_0 )$ is flat provided $\nabla^0$ is a flat connection. If $N$ is an injective $B$-module, then the homology $H_*(A; \rhom B MN,\nabla_*)$ can be computed from the cohomology of the complex $H^*(A;  M,\nabla^*)$ associated  to the flat connection $\nabla^n$, by applying the functor $\rhom B - N$. 

A hom-connection associated to a flat strong connection in a coalgebra-Galois extension in \ref{strong} is flat.
\end{statement}

\begin{statement} In view of \ref{ind}, a map of differential graded algebras can be used to induce  hom-connections. The curvature of the induced hom-connection turns out to be closely related to the curvature of the original hom-connection.\medskip

\noindent {\bf Proposition.} {\em  Let $\theta : \oa \to \ob$ be a map of differential graded algebras, and  $(\rhom A B M, \nabla_0^\theta)$ be a hom-connection (with respect to $\ob$ ) induced from a hom-connection $\nabla_0: \rhom A {\oan 1} M\to M$ as in \ref{ind}. Denote by $F$ the curvature of $\nabla_0$, and by $F^\theta$ the curvature of $\nabla_0^\theta$. Then, for all $f\in \rhom B {\obn 2} {\rhom A BM} \simeq \rhom A {\obn 2} M$, and all $b\in B$,  
$$
F^\theta(f)(b) = F(fb\circ\theta).
$$
In particular, if $\nabla_0$ is flat, then so is $\nabla_0^\theta$.}\medskip

\begin{proof}
Note that, in view of the definition of the action of the differential graded algebra $\ob$ on the space $\bigoplus_{n=0} \rhom B {\obn n} M$ (see \ref{higher}), the definition of $\nabla^\theta_0$ in Corollary~\ref{ind} can be equivalently written as $
\nabla^\theta_0 (f)(b) = \nabla_0(fb\circ \theta) - f(db)
$. The higher hom-connections $\nabla^\theta_n$ associated to $\nabla_0^\theta$ are defined as in \ref{higher}. This definition of $\nabla_1^\theta$ together with the definition of $\nabla_0^\theta$ and with the Leibniz rule for $d$ imply, for all $b\in B$, $\omega\in \obn 1$ and $f\in \rhom B {\obn 2} {\rhom A BM} \simeq \rhom A {\obn 2} M$,
\begin{eqnarray*}
\nabla^\theta_1(f)(\omega)(b) &=& \nabla^\theta_0 (f\omega) (b) + f(d\omega)(b) \\
&=& \nabla_0(f\omega b\circ \theta) - (f\omega)(db) + f(d\omega b) \\
&=& \nabla_0(f{\omega b}\circ \theta) + f\left( d\left(\omega b\right)\right).
\end{eqnarray*}
Identifying $\rhom B {\obn 2} {\rhom A BM}$ with $\rhom A {\obn 2} M$, one thus obtains
\begin{equation}\label{eq.*}
\nabla^\theta_1 (f)(\omega) = \nabla_0(f\omega\circ\theta) + f(d\omega). 
\end{equation}
Starting with the just derived equality  \eqref{eq.*} and then using the fact that $\theta$ is a map of differential graded algebras and the definition of $\nabla_1$ in terms of $\nabla_0$, one computes, for all $\omega\in \oan 1$ and $f\in \rhom A {\obn 2} M$,
$$
\nabla_1^\theta(f)(\theta(\omega)) = \nabla_0(f{\theta(\omega)}\circ\theta) + f(d\theta(\omega)) 
= \nabla_0 \left(\left(f\circ\theta\right)\omega\right) + (f\circ\theta)(d\omega) = \nabla_1(f\circ\theta)(\omega).
$$
Therefore,
\begin{equation}\label{eq.**}
\nabla_1^\theta(f)\circ \theta = \nabla_1(f\circ \theta). 
\end{equation}
With these formulae at hand one computes the curvature of $\nabla^\theta_0$ as follows:
\begin{eqnarray*}
F^\theta(f)(b) &=& (\nabla_0^\theta\circ\nabla_1^\theta) (f)(b) = \nabla_0\left( \nabla_1^\theta\left( f\right) b\circ\theta\right) - \nabla_1^\theta(f)(db)\\
&=& \nabla_0 \left( \nabla_1^\theta(f) b\circ \theta\right) - \nabla_0(f{db}\circ\theta) \\
&=& \nabla_0\left(\nabla_1^\theta(fb) \circ \theta\right) + \nabla_0(f{db}\circ\theta)  - \nabla_0(f{db}\circ\theta) \\
&=& \nabla_0\left(\nabla_1\left( fb\circ\theta\right)\right) = F(fb\circ \theta).
\end{eqnarray*}
The second equality follows by the definition of $\nabla_0^\theta$ in \ref{ind}, the third equality is a consequence of  \eqref{eq.*}. Since $\nabla^\theta_1$ arises from the hom-connection $\nabla^\theta_0$, it satisfies the Leibniz rule in Lemma~\ref{lemma.Leibniz}. This yields the fourth equality. The penultimate equality follows by  \eqref{eq.**}. This proves the first assertion of the proposition. The second assertion is immediate.
\end{proof}
\end{statement} 

\begin{statement} A differential graded algebra $\oa$ is said to be {\em semi-free} if $\oan {n+1} = \oan n\ot_A \oan 1$, for all $n\geq 1$. As revealed in \cite{Roi:mat}, there is a bijective correspondence between semi-free differential graded algebras over $A$ and $A$-corings with a group-like element. Starting with an $A$-coring $\cC$ \cite{Swe:pre} with coproduct $\Delta: \cC\to \cC\ot_A \cC$, counit $\eps: \cC\to A$ and a group-like element $x\in \cC$, the associated differential graded algebra $\oa_{\cC,x}$ is determined from $\oan 1_{\cC,x} := \ker \eps$, with the differential
$$
da = x\ot_A a -a\ot_A x, \qquad dc = x\ot_A c -\Delta(c) + c\ot_A x,
$$
for all $a\in A$, $c\in \ker\eps$.

Let $j:\ker\eps\to \cC$ be the inclusion map. Take a right $A$-module $M$. Since $\cC \simeq A \oplus \ker\eps$ (cf.\ \cite[28.14]{BrzWis:cor}), the formula, for any $f\in \rhom A \cC M$,
$$
 \varphi (f) = \nabla_0 (f\circ j) + f(x),
$$
establishes a bijective correspondence between $k$-linear maps $ \varphi: \rhom A\cC M\to M$ and $\nabla_0: \rhom A {\oan 1_{\cC,x}} M\to M$. Under this correspondence, $(M,\nabla_0)$ is a hom-connection if and only if $ \varphi$ is a right $A$-module map rendering commutative the following diagram 
$$
\xymatrix{\rhom A  A M \ar[rr]^-{\rhom A  \eps M} \ar[rd]_\simeq && \rhom A  \cC M\ar[dl]^{ \varphi} \\
& M . & }
$$
 The hom-connection $(M,\nabla_0)$ is flat if and only if also the following diagram 
 $$
\xymatrix{
\rhom A  \cC {\rhom A  \cC M} \ar[rrrr]^-{\rhom A  \cC { \varphi}}\ar[d]_\simeq &&&& \rhom A \cC M \ar[d]^{ \varphi} \\
\rhom A {\cC\ot_A\cC} M \ar[rr]^-{\rhom A  \Delta M} && \rhom A  \cC M \ar[rr]^-{ \varphi} && M ,}
$$
commutes. These two conditions mean that $(M, \varphi)$ is a {\em contramodule} for $\cC$; see \cite[Section~III.5]{EilMoo:fou}, \cite{Pos:hom}. Therefore, flat hom-connections with respect to a semi-free differential graded algebra are in bijective correspondence with {contramodules} of the associated coring.

The first order calculus $\oa^1_{\cC,x}$ is inner if and only if $\cC$ is cosplit coring, i.e.\ if and only if there exists an element $\iota \in \cC$ such that, for all $a\in A$, $a\iota=\iota a$ and $\eps(\iota) =1$. The generating one-form is $\Xi = \iota-x$. The  hom-connection defined in Example~\ref{ex.inner} is flat provided $\iota$ is a group-like element.
\end{statement}

  \section*{Acknowledgements} 
  I would like to thank Edwin Beggs, Gabriella B\"ohm, Piotr Hajac and Tomasz Maszczyk for discussions and comments. This paper was completed during author's stay at the Mathematical Institute of the Polish Academy of Sciences (IMPAN) in Warsaw. The support of grants MKTD-CT-2004-509794 and N201 1770 33 is acknowledged.


\begin{thebibliography}{99}{} 
\bibitem{BegBrz:ser} E.J.\ Beggs and T.\ Brzezi\'nski, {\em The Serre spectral 
sequence of a noncommutative fibration for de Rham cohomology}, Acta Math.\ 195
(2005), 155--196.

\bibitem{Brz:fla} T.\ Brzezi\'nski, {\em  Flat connections and (co)modules}, in {\em New Techniques in Hopf Algebras and Graded Ring Theory},
S.\ Caenepeel and F.\ Van Oystaeyen (eds.), Universa Press, Wetteren,  pp.\ 35--52. (2007). 

\bibitem{BrzMaj:geo}
T.~Brzezi\'nski and S.~Majid, {\em Quantum geometry of algebra 
factorisations and coalgebra 
bundles}, Commun.\ Math.\ Phys.\ 213 (2000), 491--521.

\bibitem{BrzWis:cor} T.\ Brzezi\'nski and R.\ Wisbauer, {\em Corings and Comodules},  
        Cambridge University Press, Cambridge (2003). Erratum: http://www-maths.swan.ac.uk/staff/tb/Corings.htm

\bibitem{Con:ncg}
A.\ Connes, {\em Noncommutative Geometry.} Academic Press, New York 1994.

\bibitem{CunQui:alg} J.\ Cuntz and D.\ Quillen, {\em Algebra extensions
	and nonsingularity},  
     { J.\ Amer.\ Math.\ Soc.} 8 (1995), 251--289.
     
\bibitem{DabGro:str} { L.~D\c{a}browski, H.~Grosse} and {P.\ M.~Hajac},
{\it Strong connections and Chern-Connes pairing in the Hopf-Galois theory},
Comm.~Math.~Phys. 220 (2001), 301--331.

\bibitem{DeMIng:sep} F.\ DeMeyer and E.\ Ingraham, {\em Separable Algebras Over Commutative Rings}, Lecture Notes in Mathematics 181, Springer-Verlag, Berlin 1971.

\bibitem{Doi:hom} Y.\ Doi, {\em Homological coalgebra}, {J.\ Math.\ Soc.\ Japan} 33 (1981), 31--50.

\bibitem{DubMad:cur} M.\ Dubois-Violette, J.\ Madore, T.\ Masson, J.\ Mourad,
{\em On curvature in noncommutative geometry}, J.\ Math.\ Phys.\ 37 (1996), 
4089--4102.

\bibitem{EilMoo:fou} S.\ Eilenberg and L.C.\ Moore,  {\em Foundations of relative homological algebra,} Mem.\ Amer.\ Math.\ Soc.\ 55 (1965).

\bibitem{Haj:str} P.~M.~Hajac, {\em 
Strong connections on quantum principal bundles},
Commun.\ Math.\ Phys.\ 182 (1996), 579--617.

\bibitem{Mad:int}
J.\ Madore,
{\em An Introduction to Noncommutative Differential Geometry
    and Its Physical Applications.} 2nd ed., Cambridge
University Press, Cambridge (1999).

\bibitem{Pos:hom} L.\ Positselski, {\em Homological algebra of semimodules and semicontramodules}, arXiv:0708.3398 (2007).

\bibitem{Roi:mat}
A.V.\ Roiter, {\em Matrix problems and representations of BOCS's}, [in:] Lecture Notes in Mathematics, vol. 831, Springer-Verlag, Berlin and New York,  pp.\ 288--324, (1980).
\bibitem{Swe:pre}  M.E.\ Sweedler, {\em The predual theorem to the 
	Jacobson-Bourbaki theorem},  Trans.\ Amer.\ Math.\ Soc.\ 
	213 (1975), 391--406.
\end{thebibliography}
\end{document}